\theoremstyle{plain}
\newtheorem{thm}{Theorem}
\newtheorem{coro}[thm]{Corollary}
\newtheorem*{thmA}{Theorem A}
\newtheorem*{corB}{Corollary B}
\newtheorem{lem}[thm]{Lemma}
\newtheorem{prop}[thm]{Proposition}
\theoremstyle{remark}
\newtheorem{rem}[thm]{Remark}
\numberwithin{thm}{section}
\numberwithin{equation}{section}
\theoremstyle{definition}
\newtheorem*{defi}{Definition}
\newcommand{\F}{\mathbb{F}}
\newcommand{\N}{\mathbb{N}}
\newcommand{\Q}{\mathbb{Q}}
\newcommand{\rk}{\mathrm{rk}}
\newcommand{\dd}{\mathrm{d}}
\DeclareMathOperator{\Gal}{Gal}
\begin{document}

\title[Finite quotients of Galois pro-$p$ groups]{Finite quotients of Galois pro-$p$ groups \\ and rigid fields}
\author{C. Quadrelli}
\date{\today}
\address{Department of Mathematics, Arts and Crafts\\ University of Milano-Bicocca\\ Ed.~U5, Via R.Cozzi 53\\
20125 Milano, Italy}
\email{c.quadrelli1@campus.unimib.it}

\begin{abstract}
For a prime number $p$, 
we show that if two certain canonical finite quotients of a finitely generated Bloch-Kato pro-$p$ group $G$ coincide,
then $G$ has a very simple structure, i.e., $G$ is a $p$-adic analytic pro-$p$ group (see Theorem~A).
This result has a remarkable Galois-theoretic consequence: if the two corresponding canonical finite extensions
$F^{(3)}/F$ and $F^{\{3\}}/F$ of a field $F$ --
with $F$ containing a primitive $p$-th root of unity -- coincide, then $F$ is $p$-rigid (see Corollary~B).
The proof relies only on group-theoretic tools, and on certain properties of Bloch-Kato pro-$p$ groups.
This paper will appear on the {\it Annales math\'ematiques du Qu\'ebec}.
\end{abstract}

\keywords{Bloch-Kato pro-$p$ groups, Zassenhaus filtration, absolute Galois groups, $p$-rigid fields, analytic pro-$p$ groups}

\subjclass[2010]{20E18, 12F10, 11S20}
\maketitle

\section{Introduction}\label{s:intro}

Let $p$ be a prime number, and let $G$ be a pro-$p$ group.
The Frattini subgroup $\Phi(G)$ of $G$ is the closed subgroup of $G$ generated by the $p$-powers
and the commutators of the elements of $G$.
In particular, the quotient $G/\Phi(G)$ is an elementary abelian $p$-group.
Let $\Phi_2(G)$ be the Frattini subgroup of the Frattini subgroup of $G$, i.e., $\Phi_2(G)=\Phi(\Phi(G))$.

Also, let $P_n(G)$, $n\geq1$, denote the $p$-descending central series of $G$.
In particular, one has $P_2(G)=\Phi(G)$ and $P_3(G)=\Phi(G)^p[G,\Phi(G)]\supseteq \Phi_2(G)$.
For the class of finitely generated Bloch-Kato pro-$p$ groups, we prove the following result.

\begin{thmA}
 One has the equality $\Phi_2(G)=P_3(G)$ if, and only if, $G$ is $p$-adic analytic.
\end{thmA}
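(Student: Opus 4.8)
The plan is to first reformulate the stated equality and then treat the two implications separately, the forward one being the substantial direction. Since $\Phi_2(G)=\Phi(G)^p[\Phi(G),\Phi(G)]$ while $P_3(G)=\Phi(G)^p[G,\Phi(G)]$, and $[\Phi(G),\Phi(G)]\subseteq[G,\Phi(G)]$, the inclusion $\Phi_2(G)\subseteq P_3(G)$ is automatic, and $\Phi_2(G)=P_3(G)$ holds if and only if $[G,\Phi(G)]\subseteq\Phi_2(G)$; equivalently, $G$ acts trivially by conjugation on the elementary abelian group $\Phi(G)/\Phi_2(G)$. I would record at the outset that $\Phi(G)$ is open in $G$, hence finitely generated, and is again Bloch-Kato, so that $G$ is $p$-adic analytic if and only if $\Phi(G)$ is.

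For the implication ``$p$-adic analytic $\Rightarrow$ equality'' I would invoke the structure theory of finitely generated Bloch-Kato pro-$p$ groups: such a group which is $p$-adic analytic is $\theta$-abelian, and in particular powerful, so that (for $p$ odd) $[G,G]\subseteq G^p$ and $\Phi(G)=G^p$. A direct commutator computation then gives $[G,\Phi(G)]\subseteq\Phi(G)^p$ and $[\Phi(G),\Phi(G)]\subseteq\Phi(G)^p$, whence both $P_3(G)$ and $\Phi_2(G)$ collapse to $\Phi(G)^p$ and the equality holds (with the usual $\Phi(G)^4$ adjustment at $p=2$).

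The reverse implication ``$\Phi_2(G)=P_3(G)\Rightarrow p$-adic analytic'' is the heart of the matter. Writing $d=\dim_{\F_p}H^1(G,\F_p)$, my strategy is to show that the cup product $\Lambda^2 H^1(G,\F_p)\to H^2(G,\F_p)$ is an isomorphism. Surjectivity is free from the Bloch-Kato hypothesis, i.e.\ from the quadraticity of $H^*(G,\F_p)$. For injectivity I would pass to the graded Lie algebra attached to the $p$-descending central series and compare $\Phi(G)/\Phi_2(G)$ with $\Phi(G)/P_3(G)$: the reformulated hypothesis says these coincide, which forces the degree-two initial forms of a minimal set of relations to exhaust the commutator part $\Lambda^2(G/\Phi(G))$ of the free object, giving $\dim_{\F_p}H^2(G,\F_p)=\binom{d}{2}$. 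Combined with surjectivity this yields that $H^*(G,\F_p)$ is the exterior algebra on $H^1$, and I would conclude by citing that a Bloch-Kato pro-$p$ group with exterior cohomology is $\theta$-abelian, hence $p$-adic analytic.

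The main obstacle I anticipate is precisely this injectivity step: converting the degree-two, Frattini-level hypothesis into the numerical equality $\dim H^2(G,\F_p)=\binom{d}{2}$ requires controlling $d(\Phi(G))$ and ruling out the ``free-like'' directions in the relation module that would make $G$ non-analytic. Here the special properties of Bloch-Kato groups are indispensable, together with the five-term exact sequence of $1\to\Phi(G)\to G\to G/\Phi(G)\to1$: since $H^1(G/\Phi(G),\F_p)\to H^1(G,\F_p)$ is an isomorphism, the inflation $H^2(G/\Phi(G),\F_p)\to H^2(G,\F_p)$ is surjective by quadraticity and its kernel is $H^1(\Phi(G),\F_p)^{G/\Phi(G)}$, which under the hypothesis is the whole of $H^1(\Phi(G),\F_p)$. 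Extracting analyticity from this identity, rather than merely re-deriving the reformulation, is the delicate point that the Bloch-Kato hypothesis on $G$ and on all its closed subgroups is meant to resolve.
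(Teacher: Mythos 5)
Your easy direction and your reformulation of the hypothesis (that $\Phi_2(G)=P_3(G)$ holds iff $[G,\Phi(G)]\le\Phi_2(G)$, i.e.\ iff $\Phi(G)/\Phi_2(G)$ is central) are both correct, and they match the paper's Lemma~\ref{lem:proof} and Corollary~\ref{coro:finite quotients}. The gap is in the converse, exactly at the step you yourself flag as delicate. Pull the hypothesis back to a minimal presentation $G=F/R$: it becomes $P_3(F)\subseteq R\,\Phi_2(F)$, which is a statement about $R\cap P_3(F)$. Since $[F,\Phi(F)]\le P_3(F)$, and conjugation therefore acts trivially on $P_2(F)/P_3(F)$, the hypothesis carries no information at all in the degree-two layer $P_2(F)/P_3(F)$ where your ``initial forms'' live; nothing forces the initial forms of a minimal relation set to exhaust $\Lambda^2(F/\Phi(F))$. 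Concretely, an extraspecial $p$-group of exponent $p$ satisfies $\Phi_2(G)=P_3(G)=1$, yet the degree-two initial forms of its defining relations span a proper subspace of $\Lambda^2$ (the zero subspace when $|G|=p^3$), and its $\F_p$-cohomology is nowhere near exterior. Such a group is not Bloch-Kato, but that is precisely the point: your deduction would have to use the Bloch-Kato property in an essential way that you never specify --- and your final sentence concedes exactly this. (At $p=2$ even Bloch-Kato alone is not enough: $C_2$ is Bloch-Kato, satisfies the hypothesis, and has polynomial rather than exterior cohomology, so torsion-freeness must also enter.) Finally, the statement you plan to cite at the end --- that a Bloch-Kato pro-$p$ group with exterior cohomology is $\theta$-abelian --- is not part of the toolkit this paper works with: Theorem~\ref{thm:bk} characterizes analytic Bloch-Kato groups by finite rank and powerfulness, not by their cohomology ring, and the cited statement is of essentially the same depth as Theorem~A itself.

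The paper's proof of the hard direction avoids cohomology, relation modules and the five-term sequence altogether; the tool you are missing is the Zassenhaus filtration. Since $\Phi(G)=D_2(G)$, one has $\Phi_2(G)=D_2^p[D_2,D_2]\le D_4(G)$, so the hypothesis yields the chain $\gamma_3(G)\le P_3(G)=\Phi_2(G)\le D_4(G)$. Lazard's formula $D_n=\prod_{ip^h\ge n}\gamma_i(G)^{p^h}$ turns this into $D_3=D_4$ when $p\neq 3$, and into $D_4=D_5$ when $p=3$ (via $\gamma_4(G)=[G,\gamma_3(G)]\le[D_1,D_4]\le D_5$). By Theorem~\ref{thm:zassenhaus and rank}, stabilization of the Zassenhaus filtration forces $\rk(G)<\infty$, and then Theorem~\ref{thm:bk} gives that $G$ is $p$-adic analytic. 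If you wish to salvage a cohomological argument, you would first need an independent proof that the hypothesis bounds the rank of $G$ (or of $\Phi(G)$); as written, your proposal assumes the hardest part of the theorem.
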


In this case the group $G$ has a very simple structure, as it is meta-abelian
and it is possible to provide an explicit presentation for $G$ (cf. \cite[Theorem~4.6]{blochkato}).

One has also the following Galois-theoretic consequence.
Let $F$ be a field containing a primitive $p$-th root of unity.
By $F^\times$ we denote the (multiplicative) group of non-zero elements of $F$.
We consider the Galois extension $F^{(3)}$ of $F$ obtained by first taking $F^{(2)}$ to be the compositum
over $F$ of all extensions of $F$ of degree $p$, and then taking $F^{(3)}$ to be the compositum over $F^{(2)}$
of all the extensions of $F^{(2)}$ of degree $p$ that are Galois over $F$.
We also denote by $F^{\{3\}}$ the compositum over $F^{(2)}$ of all extensions of $F^{(2)}$ of degree $p$
(cf. \cite[\S~2.3]{cmq:fast}). Thus
\[F^{\{3\}}={(F^{(2)})}^{(2)}.\]
Then one may characterize those fields $F$ with the property that $F^{(3)}=F^{\{3\}}$.
In fact, from Theorem~A we shall obtain the following result.

\begin{corB}
Let $F$ be a field containing a primitive $p$-th root of unity, and assume that the quotient $F^\times/(F^\times)^p$
is finite. (Assume further that $\sqrt{-1}\in F$ if $p=2$).
Then $F^{(3)}=F^{\{3\}}$ if, and only if, $F$ is $p$-rigid;
\end{corB}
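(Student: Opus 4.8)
\section*{Proof proposal for Corollary~B}

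The plan is to pass from the field $F$ to its maximal pro-$p$ Galois group $G=G_F(p)=\Gal(F(p)/F)$, where $F(p)$ is the maximal $p$-extension of $F$, and to translate the equality $F^{(3)}=F^{\{3\}}$ into the group-theoretic equality $\Phi_2(G)=P_3(G)$, so that Theorem~A may be applied. First I would check that $G$ belongs to the class covered by Theorem~A. The hypothesis that $F^\times/(F^\times)^p$ is finite gives, via Kummer theory, that $G/\Phi(G)\cong H^1(G,\F_p)$ is finite, so $G$ is finitely generated; and the hypothesis $\mu_p\subset F$ (together with $\sqrt{-1}\in F$ when $p=2$, which rules out the formally real behaviour at the prime $2$) ensures, through the norm residue isomorphism theorem of Voevodsky--Rost, that the $\F_p$-cohomology of $G$ and of all its closed subgroups is quadratic, i.e. $G$ is a Bloch--Kato pro-$p$ group.

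Next I would set up the Galois dictionary. Since $F^{(2)}$ is the compositum of all degree-$p$ extensions of $F$, its Galois group over $F$ is the Frattini quotient $G/\Phi(G)$, so $\Gal(F(p)/F^{(2)})=\Phi(G)$; as $F(p)$ is also the maximal $p$-extension of $F^{(2)}$, this identifies $\Phi(G)$ with $G_{F^{(2)}}(p)$. Applying the same description one level up, $F^{\{3\}}=(F^{(2)})^{(2)}$ is the compositum of all degree-$p$ extensions of $F^{(2)}$, hence corresponds to the Frattini subgroup of $\Phi(G)$, giving $\Gal(F(p)/F^{\{3\}})=\Phi(\Phi(G))=\Phi_2(G)$. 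For $F^{(3)}$ I would argue that the degree-$p$ extensions $L/F^{(2)}$ inside $F(p)$ which are moreover Galois over $F$ correspond exactly to the index-$p$ subgroups $H\leq\Phi(G)$ that are normal in $G$; for such an $H$ the group $G$ acts on $\Phi(G)/H\cong\Z/p$ through a finite $p$-group, but $\mathrm{Aut}(\Z/p)$ has order $p-1$ coprime to $p$, so the action is trivial and $[G,\Phi(G)]\subseteq H$, while $\Phi(G)^p\subseteq H$ is automatic; thus every such $H$ contains $P_3(G)=\Phi(G)^p[G,\Phi(G)]$. Conversely every index-$p$ subgroup of $\Phi(G)$ containing $P_3(G)$ is normal in $G$, because $\Phi(G)/P_3(G)$ is central in $G/P_3(G)$. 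Taking the intersection over all such $H$, and using that $P_3(G)$ is the intersection of the index-$p$ subgroups of $\Phi(G)$ lying above it since $\Phi(G)/P_3(G)$ is a finite elementary abelian $p$-group, I obtain $\Gal(F(p)/F^{(3)})=P_3(G)$. The inclusion $\Phi_2(G)\subseteq P_3(G)$ recorded in the introduction then gives $F^{(3)}\subseteq F^{\{3\}}$, and the Galois correspondence shows that $F^{(3)}=F^{\{3\}}$ holds if and only if $\Phi_2(G)=P_3(G)$.

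At this point Theorem~A yields that $F^{(3)}=F^{\{3\}}$ is equivalent to $G=G_F(p)$ being $p$-adic analytic. To conclude I would invoke the known dictionary between the structure of maximal pro-$p$ Galois groups and rigidity: for $F$ containing $\mu_p$ (and $\sqrt{-1}$ when $p=2$), the field $F$ is $p$-rigid precisely when $G_F(p)$ is $p$-adic analytic, the rigid case being exactly the one in which $G_F(p)$ is meta-abelian and admits the explicit presentation of \cite[Theorem~4.6]{blochkato}. Combining this equivalence with the one obtained above proves the corollary.

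I expect the main obstacle to be the Galois-theoretic identification $\Gal(F(p)/F^{(3)})=P_3(G)$. Unlike $F^{\{3\}}$, whose description is simply the Frattini subgroup of $\Phi(G)$, the field $F^{(3)}$ is cut out by the additional \emph{Galois-over-$F$} condition, so the correspondence depends on the triviality of the $G$-action on cyclic quotients of order $p$ and on correctly matching normality in $G$ with centrality modulo $P_3(G)$. The second delicate point is the final equivalence between $p$-adic analyticity of $G_F(p)$ and $p$-rigidity of $F$, which is precisely where the field-arithmetic input enters, as opposed to the purely group-theoretic content of Theorem~A.
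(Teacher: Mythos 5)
Your proposal is correct and takes essentially the same route as the paper: pass to $G=G_F(p)$, use the identifications $\Gal(F^{\{3\}}/F)=G/\Phi_2(G)$ and $\Gal(F^{(3)}/F)=G/P_3(G)$ to convert $F^{(3)}=F^{\{3\}}$ into $\Phi_2(G)=P_3(G)$, apply Theorem~A (with Theorem~\ref{thm:bk}), and conclude by the equivalence between $p$-rigidity of $F$ and powerfulness of $G$, which is \cite[Proposition~3.8]{cmq:fast}. The only notable difference is that you prove the Galois-theoretic dictionary \eqref{eq:galois groups} from scratch --- correctly, via triviality of pro-$p$ actions on $\Z/p$ and centrality of $\Phi(G)/P_3(G)$ --- whereas the paper simply cites \cite[\S~4.1]{cmq:fast}; note also that the role of $\sqrt{-1}\in F$ when $p=2$ is to make $G$ torsion-free (as required by Theorem~\ref{thm:bk}), not to secure the Bloch--Kato property, which holds regardless.
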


(For the definition of $p$-rigid field, see Section~4.)

Bloch-Kato pro-$p$ groups were introduced in \cite{bk1} and studied first in \cite{blochkato}.
A Bloch-Kato pro-$p$ group is a pro-$p$ group which satisfies the conclusion of the Rost-Voevodsky theorem
(formerly known as the Bloch-Kato conjecture), i.e., such that the cohomology ring of every closed subgroup of $G$
with coefficients in the finite field $\F_p$ is a quadratic algebra over $\F_p$.
For example, absolute Galois groups of fields which are pro-$p$ and Galois groups of the maximal $p$-extension
of certain fields are Bloch-Kato pro-$p$ groups.
Thus, a Bloch-Kato pro-$p$ group is a very natural ``candidate'' for being realized as absolute Galois group,
and this shows the relevance of Bloch-Kato pro-$p$ groups for Galois theory.

The problem to characterize a field $F$ yielding the equality 
\begin{equation}
 \label{eq:equality fields}
F^{(3)}=F^{\{3\}}
\end{equation}
arises rather naturally, and the case when equality (\ref{eq:equality fields}) holds is considered very significant
in field theory.
Indeed, such problem has been widely studied in the past: in the case $p=2$ Corollary~B was proved
in \cite[Theorem~3.1]{agkm}, with arguments which make use of Galois cohomology,
and later in \cite[Theorem~A]{leepsmith}, with arguments relying on the theory of quadratic forms.
For $p$ odd, Corollary~B was proved in \cite[Theorem~A]{cmq:fast}, and the proof relies on certain properties
of Bloch-Kato pro-$p$ groups, together with an essential arithmetic argument (cf. \cite[Theorem~4.3]{cmq:fast}).

The above results provide a motivation for the paper, as Theorem~A is the ``group-theoretic translation'',
and it is in fact more genaral, as it holds for Bloch-Kato pro-$p$ groups,
and not only for Galois groups of maximal $p$-extensions.
Moreover, part of the interest of this result lies in the fact that the proof is purely group-theoretical,
and it does not rely on results form field theory.
Further, the proof makes use of the Zassenhaus filtration of pro-$p$ groups,
which is gaining increasing importance as tool for the study of Galois groups
(see, e.g., \cite{efrat:zassenhaus} and \cite{jantan}).

The paper is organized as follows.
In the second section, we state a number of properties on pro-$p$ groups and on their descending series.
In section~3 we prove Theorem~A, and in section~4 we provide the ``arithmetic translation''
of Theorem~A, and we prove Corollary~B.

This paper will be published on the {\it Annales math\'ematiques du Qu\'ebec}.


\section{Preliminaries on pro-$p$ groups}

Throughout this paper, subgroups of pro-$p$ groups are assumed to be closed (in the pro-$p$ topology),
and every generator is to be intended as topological generator.
In particular, given two (closed) subgroups $H_1$ and $H_2$ of a pro-$p$ group $G$, the 
subgroup $[H_1,H_2]$ is the (closed) subgroup of $G$ generated by the commutators $[g_1,g_2]$, with
$g_i\in H_i$ for $i=1,2$.
Also, for a positive integer $n$, $G^n$ denotes the (closed) subgroup of $G$ generated by the $n$-powers
of the elements of $G$.

For a finitely generated pro-$p$ group $G$, let $\dd(G)$ denote the minimal number of generators of $G$.
In particular, $\dd(G)$ is the dimension of the quotient $G/\Phi(G)$ as vector space over the finite field $\F_p$
(cf. \cite[Prop.~1.14]{ddsms}).
Then, one defines the rank of a pro-$p$ group $G$ to be the number
\[\rk(G)=\sup\{\dd(H) \:|\: H\leq G\text{ closed}\}\in\N\cup\{\infty\}\]
(cf. \cite[Definition~3.12]{ddsms}).

For a pro-$p$ group $G$, the lower $p$-central series of $G$ is the series $P_n=P_n(G)$, $n\geq1$,
of characteristic subgroups defined by $P_1=G$ and
\[ P_{n+1}=P_n^p[G,P_n]. \]
In particular, one has that $P_2(G)$ is the Frattini subgroup $\Phi(G)$, and $[P_i,P_j]\leq P_{i+j}$
for every $i,j\geq1$.
Moreover, if $G$ is finitely generated, then
the lower $p$-central series is a base of neighbourhoods of 1 in $G$ (cf. \cite[Prop.~1.16]{ddsms}).

\begin{defi}
 A pro-$p$ group $G$ is said to be {\it powerful} if $G/G^p$ is abelian, if $p$ is odd, or if $G/G^4$ is abelian,
if $p=2$.
\end{defi}

In particular, one has the following (cf. \cite[Theorems~3.6, 3.8]{ddsms}).

\begin{prop}
 \label{prop:powerful}
Let $G$ be a powerful pro-$p$ group.
\begin{itemize}
 \item[(1)] $P_n(G)=G^{p^{n-1}}$ for every $n\geq1$.
 \item[(2)] if $G$ is finitely generated, then $\rk(G)=\dd(G)$.
\end{itemize}
\end{prop}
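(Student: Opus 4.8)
The plan is to prove the two parts separately, both resting on the power structure of powerful pro-$p$ groups. For part (1) I argue by induction on $n$. Since $P_1(G)=G=G^{p^0}$, the base case is $n=2$: powerfulness gives $[G,G]\leq G^p$ (for $p$ odd; for $p=2$ one has $[G,G]\leq G^4\leq G^2$), hence $P_2(G)=G^p[G,G]=G^p=G^{p^{2-1}}$. In particular $\Phi(G)=G^p$ in either case, so $\dd(G)=\dim_{\F_p}(G/G^p)$, a fact I will reuse in part (2).

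For the inductive step, assume $P_n(G)=G^{p^{n-1}}$. Then $P_{n+1}(G)=P_n(G)^p[G,P_n(G)]=(G^{p^{n-1}})^p\,[G,G^{p^{n-1}}]$. The crux is the structural lemma that in a powerful pro-$p$ group each $G^{p^i}$ is powerfully embedded, i.e. $[G,G^{p^i}]\leq(G^{p^i})^p$, that the set of $p^i$-th powers is already a subgroup, and that $(G^{p^i})^p=G^{p^{i+1}}$. Granting this with $i=n-1$, both factors above lie in $G^{p^n}$ and in fact $(G^{p^{n-1}})^p=G^{p^n}$, so $P_{n+1}(G)=G^{p^n}$, closing the induction. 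The lemma itself is established by the Hall--Petrescu collection formula, which rewrites $(xy)^{p^i}$ and commutators of $p^i$-th powers as $x^{p^i}y^{p^i}$ times a product of higher commutators raised to binomial powers; powerfulness lets one absorb every correction term into a deeper stage of the series by a nested induction. This is the technical engine of the whole proposition.

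For part (2) I would show that $\dd(H)\leq\dd(G)$ for every closed subgroup $H\leq G$; combined with the trivial $\dd(G)\leq\rk(G)$ this yields $\rk(G)=\dd(G)$. Writing $G=\varprojlim G/P_n(G)$ reduces the bound to subgroups of finite powerful $p$-groups. Here part (1) is exactly what is needed: the $p$-power map induces surjective homomorphisms $P_i/P_{i+1}\twoheadrightarrow P_{i+1}/P_{i+2}$, so every section has $\F_p$-rank at most $\dd(G/\Phi(G))=\dd(G)=d$; equivalently $G$ has an open uniform subgroup $U$ with $\dim U=d$. Identifying $U$ with a $\Z_p$-Lie lattice via the logarithm, a closed subgroup of $U$ corresponds (up to finite index) to a $\Z_p$-submodule of $\Z_p^{\,d}$, which needs at most $d$ generators; a short passage from $H\cap U$ back to $H$ then gives $\dd(H)\leq d$.

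The \emph{main obstacle} is precisely this inequality $\dd(H)\leq\dd(G)$ in part (2). Naive estimates fail: the Frattini inclusion one would like, $H\cap\Phi(G)\subseteq\Phi(H)$, does not hold in general, and subadditivity along $1\to H\cap G^p\to H\to HG^p/G^p\to 1$ only delivers $\dd(H)\leq 2d$. Getting the sharp bound forces one to exploit the abelian/lattice structure furnished by part (1) — that consecutive power maps are surjective homomorphisms on the graded layers — rather than Frattini quotients alone; this is the content of the Lubotzky--Mann rank theorem, and it is where the real work lies.
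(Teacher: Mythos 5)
The paper offers no proof of this proposition at all: it is quoted directly from \cite[Theorems~3.6 and 3.8]{ddsms}, so your attempt has to be measured against the standard arguments there. Your part (1) is fine and is essentially that standard argument: the base case from the definition of powerful, then induction via the lemma that each $G^{p^i}$ is powerfully embedded and satisfies $(G^{p^i})^p=G^{p^{i+1}}$, proved by collection/commutator identities after reducing to finite quotients.

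Part (2), however, has a genuine gap, sitting exactly where you yourself locate ``the real work''. You correctly reduce the bound $\dd(H)\leq\dd(G)$ to subgroups of the finite powerful $p$-groups $G/P_n(G)$, but you never prove that finite statement: you switch instead to an open uniform subgroup $U$ and $\Z_p$-Lie lattices, machinery that cannot even be formulated for finite $p$-groups (uniform groups are torsion-free), so the reduction is abandoned rather than used. The lattice step is moreover circular as stated: to identify a closed subgroup $H\leq U$, up to finite index, with a $\Z_p$-submodule of $\Z_p^{\,d}$, one needs $\log(H)$ to be a sublattice, or $H$ to contain an open uniform subgroup --- and the latter presupposes that $H$ is finitely generated of finite rank, which is precisely what is being proved. (Addition in the Lie lattice is $x+y=\lim_n\bigl(x^{p^n}y^{p^n}\bigr)^{p^{-n}}$, and the $p^n$-th root of an element of $H$, taken inside $U$, need not lie in $H$, so $\log(H)$ being a submodule is not automatic; the alternative route through the $p$-adic Cartan theorem is a separate and harder result.) Finally, even granting $\dd(H\cap U)\leq d$, your ``short passage'' back to $H$ yields only $\dd(H)\leq\dd(H\cap U)+\dd\bigl(H/(H\cap U)\bigr)$, which overshoots $d$ --- the same failure mode as the naive estimates you rightly reject. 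For instance $G=\Z_p\times\Z/p$, $U=p\Z_p\times\{0\}$, $H=G$ gives the bound $1+2=3$ rather than $2$ (note also that here every open uniform subgroup has dimension $1<\dd(G)$, so your claim $\dim U=\dd(G)$ fails in general). What is missing is exactly \cite[Theorem~2.9]{ddsms}: a self-contained induction on the order of a finite powerful $p$-group showing that every subgroup is generated by at most $\dd(G)$ elements. That Lubotzky--Mann induction, which you name but do not supply, is the true engine of part (2); the Lie-lattice picture is not a substitute for it.
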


Another important descending series of pro-$p$ groups is the {\it Zassenhaus filtration}.
For an arbitrary group $G$, the Zassenhaus filtration of $G$ is the series $D_n=D_n(G)$, $n\geq1$,
of characteristic subgroups defined by $D_1=G$ and
\begin{equation}
 \label{eq:zassenhaus definition}
 D_n=D_{\lceil n/p\rceil}^p\prod_{i+j=n}\left[D_i,D_j\right],
\end{equation}
where $\lceil n/p\rceil$ is the least integer $m$ such that $mp\geq n$.
In particular, the Zassenhaus filtration is the fastest descending series starting at $G$
such that $[D_i,D_j]\leq D_{i+j}$ and $D_i^p\leq D_{ip}$ for every $i,j\geq1$.
For computational purposes, one has the formula
\begin{equation}
 \label{eq:Lazard formula}
 D_n=\prod_{ip^h\geq n}\gamma_i(G)^{p^h},
\end{equation}
established by M.~Lazard (cf. \cite[Theorem~11.2]{ddsms}),
where the $\gamma_i(G)$'s are the elements of the descending central series of $G$
(i.e., $\gamma_1(G)=G$ and $\gamma_{i+1}(G)=[G,\gamma_i(G)]$ for every $i\geq1$).
Thus, if $G$ is a (pro-)$p$ group, then $D_2(G)$ is the Frattini subgroup $\Phi(G)$.

For the Zassenhaus filtration of a pro-$p$ group,
one has the following remarkable result (cf. \cite[Theorem~11.4]{ddsms}).

\begin{thm}
 \label{thm:zassenhaus and rank}
Let $G$ be a finitely generated pro-$p$ group.
Then $G$ has finite rank if, and only if, $D_n(G)=D_{n+1}(G)$ for some $n\geq1$.
\end{thm}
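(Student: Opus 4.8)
The plan is to prove Theorem~\ref{thm:zassenhaus and rank} by relating the stabilization of the Zassenhaus filtration to the stabilization of the lower $p$-central series, and then invoking the standard characterization of $p$-adic analytic (equivalently, finite rank) pro-$p$ groups via powerful subgroups. First I would recall that for a finitely generated pro-$p$ group $G$, finite rank is equivalent to $G$ being $p$-adic analytic, and in turn equivalent to $G$ possessing an open normal powerful subgroup of finite rank. So it suffices to connect the condition $D_n(G)=D_{n+1}(G)$ for some $n$ to this structural property. The two directions are handled separately.

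For the forward direction, I would assume $G$ has finite rank and produce an $n$ with $D_n=D_{n+1}$. Since the Zassenhaus filtration refines the lower $p$-central series and both are descending chains of open subgroups, the key observation is that if the filtration never stabilized, the successive quotients $D_n/D_{n+1}$ would all be nontrivial elementary abelian $p$-groups, giving $G$ arbitrarily deep central-type factors. I would translate ``finite rank'' into a bound on $\dim_{\F_p} D_n/D_{n+1}$ and then argue that an infinite strictly descending chain forces the dimensions of certain associated graded pieces to grow without bound, contradicting the rank bound; concretely, one uses that the restricted Lie algebra $\bigoplus_n D_n/D_{n+1}$ associated to the Zassenhaus filtration is finite-dimensional precisely when the filtration stabilizes, and finite rank forces finite dimension of this graded object.

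For the converse, I would assume $D_n=D_{n+1}$ for some $n$ and deduce finite rank. Using the defining relation $D_{n+1}=D_{\lceil (n+1)/p\rceil}^p\prod_{i+j=n+1}[D_i,D_j]$ together with the stabilization hypothesis, I would show that $D_n$ is powerful: the equality $D_n=D_{n+1}$ controls the $p$-power and commutator structure inside $D_n$, forcing $[D_n,D_n]\leq D_n^p$ (with the appropriate modification $D_n^4$ when $p=2$). Once $D_n$ is identified as an open powerful subgroup, Proposition~\ref{prop:powerful} gives $\rk(D_n)=\dd(D_n)<\infty$, and since rank is controlled by open subgroups of finite index, $\rk(G)<\infty$ as well.

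The main obstacle will be establishing powerfulness of $D_n$ rigorously from the single equality $D_n=D_{n+1}$, especially disentangling the interaction between the $p$-power term $D_{\lceil (n+1)/p\rceil}^p$ and the commutator terms in the defining product~\eqref{eq:zassenhaus definition}, and handling the prime $p=2$ separately where the powerful condition involves $G/G^4$ rather than $G/G^2$. I expect the cleanest route is to pass through Lazard's formula~\eqref{eq:Lazard formula}, which expresses each $D_n$ in terms of $\gamma_i(G)^{p^h}$, so that the stabilization can be read off as a stabilization of the graded Lie algebra; this reduces the analytic claim to a finiteness statement about a finitely generated restricted Lie algebra, which is the technical heart cited from \cite[Theorem~11.4]{ddsms}.
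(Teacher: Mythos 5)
First, a structural observation: the paper does not prove Theorem~\ref{thm:zassenhaus and rank} at all --- it imports it verbatim from \cite[Theorem~11.4]{ddsms}, so there is no internal proof to compare against. Your proposal cannot stand in for such a proof, because it is circular: you conclude by deferring ``the technical heart'' to \cite[Theorem~11.4]{ddsms}, which \emph{is} the statement being proved. If your intent was only to explain plausibility and then cite the literature, that coincides with what the paper does; read as a proof, it assumes its own conclusion.

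Beyond the circularity, both directions of your sketch contain genuine errors. In the forward direction you rest the argument on the claim that ``the restricted Lie algebra $\bigoplus_n D_n/D_{n+1}$ \dots is finite-dimensional precisely when the filtration stabilizes, and finite rank forces finite dimension of this graded object.'' The second half is false: take $G=\Z_p$, of rank $1$. By \eqref{eq:Lazard formula} one has $D_n(\Z_p)=p^{\lceil\log_p n\rceil}\Z_p$, so $D_{p^k}\neq D_{p^k+1}$ for every $k$, the chain is never eventually constant, and the graded object has a nonzero piece in every degree $p^k$, hence is infinite-dimensional. Indeed, for \emph{any} infinite finitely generated pro-$p$ group eventual constancy is impossible: each $D_n$ is open (it contains $P_n(G)$), and $\bigcap_n D_n=1$ (by \eqref{eq:Lazard formula}, $D_n\leq P_{m(n)}(G)$ with $m(n)\to\infty$), so $D_N=D_{N+1}=\cdots$ would force $D_N=1$. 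Thus ``the filtration stabilizes'' characterizes \emph{finite} groups, not finite-rank ones; the theorem asserts only a single coincidence $D_n=D_{n+1}$ (for $\Z_p$ with $p$ odd, e.g.\ $D_2=D_3=p\Z_p$), and a correct argument must detect such a pause --- for instance by comparing the superpolynomial growth of $\dim_{\F_p}I^m/I^{m+1}$ in $\F_p[[G]]$ forced by $\dim D_n/D_{n+1}\geq1$ for all $n$ (Jennings' formula) against the polynomial growth imposed by finite rank. Nothing in your sketch performs this detection. In the converse direction, the assertion that $D_n=D_{n+1}$ ``forces'' $[D_n,D_n]\leq D_n^p$ is unsupported and the available inclusions point the wrong way: the filtration gives $[D_n,D_n]\leq D_{2n}$ and $D_n^p\leq D_{np}$, and neither \eqref{eq:zassenhaus definition} nor \eqref{eq:Lazard formula} yields any containment of $D_{2n}$ in $D_n^p$. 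Extracting a powerful open subgroup from a single pause is precisely the hard content of this direction (in \cite{ddsms} it runs through the dimension-subgroup description $D_n=G\cap(1+I^n)$ and the Jennings--Lazard theory of the graded group algebra), so naming it ``the main obstacle'' and then citing the theorem itself leaves the argument with no content.
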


\begin{defi}
A topological group $G$ is a {\it $p$-adic analytic group} if $G$ has the structure of analytic manifold 
over the field of $p$-adic numbers $\Q_p$ with the properties
\begin{itemize}
 \item[(1)] the multiplication function $G\times G\to G$ given by $(x,y)\mapsto xy$ is analytic;
 \item[(2)] the inversion function $G\to G$ defined by $x\mapsto x^{-1}$ is analytic.
\end{itemize}
\end{defi}

Powerful pro-$p$ groups and $p$-adic analytic groups are tightly related.
Indeed, a topological group $G$ has the structure of a $p$-adic analytic group if, and only if,
$G$ has an open subgroup which is a powerful finitely generated pro-$p$ group (cf. \cite[Theorem~8.1]{ddsms}).
In the case of Bloch-Kato pro-$p$ groups, $p$-adic analytic groups have a rather simple structure,
as stated by the following (cf. \cite[Theorem~4.8]{blochkato}).

\begin{thm}
 \label{thm:bk}
Let $G$ be a finitely generated Bloch-Kato pro-$p$ group, and assume furhter that $G$ is torsion-free, if $p=2$.
The following are equivalent.
\begin{itemize}
 \item[(1)] $G$ has finite rank.
 \item[(2)] $G$ is $p$-adic analytic.
 \item[(3)] $G$ is powerful.
 \item[(4)] $G$ has a presentation
\begin{equation}
 \label{eq:presentation}
G=\left\langle\sigma,\tau_1,\ldots,\tau_d\:\left|\:\sigma\tau_i\sigma^{-1}=\tau_i^{1+p^k},
\tau_i\tau_j=\tau_j\tau_i \ \forall\:i,j\right.\right\rangle,
\end{equation}
with $d=\dd(G)-1$, for some $k\geq1$ ($k\geq2$, if $p=2$).
\end{itemize}
\end{thm}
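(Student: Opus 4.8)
The plan is to separate the four conditions into implications that hold for every finitely generated pro-$p$ group and the single implication that genuinely uses the Bloch-Kato hypothesis. First I would dispose of the formal arrows. The implication $(3)\Rightarrow(2)$ is immediate, since a finitely generated powerful pro-$p$ group is open in itself and hence $p$-adic analytic by the characterisation of analytic groups via open powerful subgroups (\cite[Theorem~8.1]{ddsms}); the equivalence $(1)\Leftrightarrow(2)$ is the standard fact that, for finitely generated pro-$p$ groups, finite rank is equivalent to $p$-adic analyticity (\cite{ddsms}), which one may also extract from Theorem~\ref{thm:zassenhaus and rank}. The arrow $(4)\Rightarrow(3)$ is a short computation on the presentation~(\ref{eq:presentation}): one reads off $[\sigma,\tau_i]=\tau_i^{p^k}$ and $[\tau_i,\tau_j]=1$, so $[G,G]$ is topologically generated by the elements $\tau_i^{p^k}$; since $k\geq1$ (respectively $k\geq2$ when $p=2$) these lie in $G^p$ (respectively $G^4$), whence $G/G^p$ (respectively $G/G^4$) is abelian and $G$ is powerful. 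These arrows reduce the theorem to the single implication $(2)\Rightarrow(4)$.

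For $(2)\Rightarrow(4)$ I would use repeatedly that both being Bloch-Kato and having finite rank are inherited by closed subgroups. The base step is the two-generated case: a closed subgroup $H$ generated by two elements is again a finite-rank Bloch-Kato pro-$p$ group, torsion-free if $p=2$, and therefore a torsion-free $p$-adic analytic group of dimension at most two. Its $\F_p$-cohomology is then a finite-dimensional Poincar\'e-duality algebra (the Bloch-Kato hypothesis further ensuring it is quadratic); since $\dim H^1(H,\F_p)=\dd(H)\leq2$, Poincar\'e duality forces $\dim H^2(H,\F_p)\leq1$, and this rigidity leaves only the possibilities $H\cong\Z_p$, $H\cong\Z_p^2$, or $H\cong\langle x,y\mid xyx^{-1}=y^{1+p^{k}}\rangle$. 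In particular any two elements of $G$ either commute or generate a metabelian group in which one acts on the other by a $1$-unit.

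Next I would globalise this local picture. From the two-generated analysis I would isolate a generator $\sigma$ and a complement $N=\langle\tau_1,\dots,\tau_d\rangle$ with $d=\dd(G)-1$, show that $N$ is abelian and normal in $G$ with $N\cong\Z_p^{d}$ using that the $\tau_i$ commute pairwise, and that $G=\langle\sigma\rangle\ltimes N$; the action of $\sigma$ on $N$ is then an automorphism of $\Z_p^d$ which pairwise is multiplication by a $1$-unit. Matching this with the presentation~(\ref{eq:presentation}) requires showing that this automorphism is a single homothety $x\mapsto x^{1+p^k}$, i.e.\ that the exponent $k$ is the same for every $\tau_i$ and that $\sigma$ acts by a scalar rather than by a more general matrix.

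This last point is where I expect the main obstacle to lie: the pairwise study only produces a possibly different exponent $k_i$ for each pair $(\sigma,\tau_i)$, and does not by itself force the $\tau_i$ to lie in a common abelian normal subgroup with a scalar action. To close this gap I would invoke the quadraticity of the cohomology of $G$ \emph{itself}, not merely of its subgroups: writing $H^1(G,\F_p)=\langle\bar\sigma^{\ast}\rangle\oplus V$, the cup product $H^1\wedge H^1\to H^2$ together with Poincar\'e duality constrains the space of relations so tightly that the off-diagonal products $\bar\tau_i^{\ast}\cup\bar\tau_j^{\ast}$ must vanish and the exponents $k_i$ must coincide, yielding exactly the scalar action and the presentation~(\ref{eq:presentation}). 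Verifying these cohomological constraints, and in particular the uniformity of $k$, is the technical heart of the argument.
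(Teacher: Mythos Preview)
The paper does not prove this theorem: it is imported wholesale from \cite[Theorem~4.8]{blochkato} and used as a black box (see the line ``cf.~\cite[Theorem~4.8]{blochkato}'' immediately before the statement). There is therefore no proof in the present paper to compare your proposal against.

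As for the proposal itself, the formal implications $(3)\Rightarrow(2)$, $(1)\Leftrightarrow(2)$ and $(4)\Rightarrow(3)$ are correctly handled. The sketch of $(2)\Rightarrow(4)$ identifies the right shape of the argument---reduce to two-generated subgroups via Poincar\'e duality, then globalise---but the globalisation step as written has a genuine gap that you yourself flag and do not close. Concretely, from the fact that every two-generated closed subgroup is $\Z_p$, $\Z_p^2$, or a two-generated solvable group, it does not follow that one can single out a generator $\sigma$ and an \emph{abelian normal} complement $N=\langle\tau_1,\dots,\tau_d\rangle$: you would first need to show that $d$ of the generators can be chosen to commute \emph{pairwise} and to span a subgroup normalised by the remaining one, and nothing in the two-by-two analysis gives this. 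The cohomological argument you outline at the end (quadraticity of $H^\bullet(G,\F_p)$ forcing $\bar\tau_i^\ast\cup\bar\tau_j^\ast=0$ and uniformity of $k$) is the right kind of input, but it is asserted rather than carried out, and in fact the proof in \cite{blochkato} organises this differently: it first shows that a Bloch-Kato pro-$p$ group of finite rank is \emph{locally powerful} (every finitely generated closed subgroup is powerful), deduces that $G$ itself is powerful, and only then extracts the presentation~(\ref{eq:presentation}) via a Lie-theoretic/structure argument on powerful groups rather than by assembling pairwise data. So your outline is plausible but incomplete at exactly the point you suspected, and the published proof takes a somewhat different route through the ``locally powerful'' notion.
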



\section{Proof of Theorem A}

\begin{lem}
 \label{lem:proof}
If $G$ is a powerful Bloch-Kato group, then $\Phi_2(G)=P_3(G)$.
\end{lem}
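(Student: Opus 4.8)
The plan is to exploit the powerful structure to compute both $\Phi_2(G)$ and $P_3(G)$ explicitly and show they agree. By Proposition~\ref{prop:powerful}(1), a powerful pro-$p$ group satisfies $P_n(G)=G^{p^{n-1}}$; in particular $\Phi(G)=P_2(G)=G^p$ and $P_3(G)=G^{p^2}$. So the target equality becomes $\Phi_2(G)=G^{p^2}$. The natural strategy is to analyze $\Phi(G)=G^p$ as a pro-$p$ group in its own right and compute its Frattini subgroup $\Phi_2(G)=\Phi(\Phi(G))$.

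First I would observe that a closed subgroup of a Bloch-Kato pro-$p$ group is again Bloch-Kato (this is built into the definition, which quantifies over all closed subgroups), and that $\Phi(G)=G^p$ is itself powerful: indeed standard powerful-group theory (as in \cite{ddsms}) shows that $G^p$ is powerful whenever $G$ is. Granting this, $\Phi(G)$ is a powerful Bloch-Kato pro-$p$ group, so Proposition~\ref{prop:powerful}(1) applies to it as well, giving $\Phi_2(G)=\Phi(\Phi(G))=\Phi(G)^p=(G^p)^p$. The remaining point is the identity $(G^p)^p=G^{p^2}$, i.e.\ that the subgroup generated by $p$-th powers of $p$-th powers coincides with the subgroup generated by $p^2$-th powers. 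For powerful groups this is exactly the content of the iterated relation $P_3(G)=G^{p^2}$ together with $P_3(G)=P_2(G)^p=(G^p)^p$, so the two computations meet.

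Concretely, I would chain the equalities as
\[
\Phi_2(G)=\Phi(\Phi(G))=\Phi(G)^p=(G^p)^p=G^{p^2}=P_3(G),
\]
where the second equality uses that $\Phi(H)=H^p$ for any powerful $H$ (here $H=\Phi(G)$), the third uses $\Phi(G)=G^p$, the fourth is the powerful identity $(G^p)^p=G^{p^2}$, and the last is $P_3(G)=G^{p^2}$ from Proposition~\ref{prop:powerful}(1) applied to $G$.

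The main obstacle I anticipate is justifying that $\Phi(G)$ is again powerful and that the equality $(G^p)^p=G^{p^2}$ holds at the level of closed subgroups generated by powers, since in a nonabelian pro-$p$ group the set of $p$-th powers need not be closed under multiplication and these power-subgroup identities can fail without the powerful hypothesis. The careful point is that for powerful pro-$p$ groups one has the clean fact that $G^{p^n}=\{\,x^{p^n}:x\in G\,\}$ as a subgroup and that raising to the $p$-th power is a well-behaved (surjective onto $G^p$) operation; I would cite the relevant structural results from \cite{ddsms} (the powerful-group machinery underlying Theorem~\ref{thm:bk} and Proposition~\ref{prop:powerful}) rather than re-derive them. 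Once the powerfulness of $\Phi(G)$ is in hand, the rest is the formal chain of equalities above, and the Bloch-Kato hypothesis enters only to the extent needed to keep us within the class where Proposition~\ref{prop:powerful} and the surrounding theory apply to $\Phi(G)$.
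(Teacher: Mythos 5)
Your proof is correct, and its skeleton --- the chain
\[
\Phi_2(G)=\Phi(\Phi(G))=\Phi(G)^p=(G^p)^p=G^{p^2}=P_3(G)
\]
--- is exactly the paper's. Where you diverge is the justification of the key step, namely that $\Phi(G)$ is powerful. The paper stays inside the Bloch-Kato framework: $\Phi(G)$ is a closed subgroup of $G$, hence itself Bloch-Kato; since $G$ is powerful and finitely generated, $\rk(G)=\dd(G)$ is finite by Proposition~\ref{prop:powerful}(2), so $\rk(\Phi(G))$ is finite, and Theorem~\ref{thm:bk} (finite rank implies powerful, for Bloch-Kato groups) then gives powerfulness of $\Phi(G)$. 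You instead invoke the standard fact from \cite{ddsms} that $G^p$ is powerfully embedded in a finitely generated powerful pro-$p$ group, hence powerful. Both are legitimate, and the fact you cite is part of the very theorem of \cite{ddsms} underlying Proposition~\ref{prop:powerful}. Your route has the interesting consequence that the Bloch-Kato hypothesis is actually superfluous for this lemma: it holds for every finitely generated powerful pro-$p$ group. The paper's route, by contrast, uses only the two results it has already quoted (Proposition~\ref{prop:powerful} and Theorem~\ref{thm:bk}) plus the hereditary property of Bloch-Kato groups, and so avoids importing the powerfully-embedded machinery, at the cost of making the statement look specific to Bloch-Kato groups.

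Two small remarks. First, the identity $(G^p)^p=G^{p^2}$, which you flag as the delicate point, needs no heavy machinery: $G^{p^2}\leq (G^p)^p$ because each generator $x^{p^2}=(x^p)^p$ lies in $(G^p)^p$, while conversely $(G^p)^p=P_2(G)^p\leq P_3(G)=G^{p^2}$ by the definition of the lower $p$-central series together with Proposition~\ref{prop:powerful}(1); so the equality follows by sandwiching. Second, both your argument and the paper's implicitly assume $G$ finitely generated (you via the structure theory in \cite{ddsms}, the paper via $\rk(G)=\dd(G)$ and Theorem~\ref{thm:bk}); this is harmless, since the lemma is only applied within Theorem~A where $G$ is finitely generated, but the hypothesis is genuinely being used.
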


\begin{proof}
Recall first that if $G$ is a Bloch-Kato pro-$p$ group, then every closed subgroup of $G$ is again
a Bloch-Kato pro-$p$ group.
By Proposition~\ref{prop:powerful}, one has $\Phi(G)=G^p$ and $P_3(G)=G^{p^2}$.
Since $\rk(G)$ is finite, also $\rk(\Phi(G))$ is finite, thus $\Phi(G)$ is powerful by Theorem~\ref{thm:bk}.
Therefore,  
 \[\Phi_2(G)=\Phi(\Phi(G))=\Phi(G)^p=G^{p^2},\]
and this yields the claim.
\end{proof}

\begin{proof}[Proof of Theorem~A]
Assume that $G$ is a finitely generated $p$-adic analytic Bloch-Kato group.
Then, the claim holds by Theorem~\ref{thm:bk} and Lemma~\ref{lem:proof}.

Conversely, assume that $\Phi_2(G)=P_3(G)$.
Since $[D_2,D_2]\leq D_4$ and $D_2^p\leq D_{2p}$, one has $\Phi_2(G)=D_2^p[D_2,D_2]\leq D_4$, as $\Phi(G)=D_2$.
Moreover, one has the inclusion $\gamma_3(G)\leq P_3(G)$.
Therefore, one has the chain of inclusions
\begin{equation}
 \label{eq:inclusions}
\gamma_3(G)\leq P_3(G)=\Phi_2(G)\leq D_4.
\end{equation}

We shall split the proof of this implication in three cases.
\begin{itemize}

 \item[(1)] Assume $p>3$.
By \eqref{eq:Lazard formula}, one has
\begin{eqnarray*}
&& D_3=\prod_{ip^h\geq3}\gamma_i(G)^{p^h}=\gamma_3(G)\cdot G^p \\
&\text{and}& D_4=\prod_{ip^h\geq4}\gamma_i(G)^{p^h}=\gamma_4(G)\cdot G^p.
\end{eqnarray*}
Therefore, \eqref{eq:inclusions} implies
\[ D_3(G)=\gamma_3(G)\cdot G^p\leq P_3(G)=\Phi_2(G)\cdot G^p\leq D_4,\]
as $G^p\leq D_4$.
Thus, one has the equality $D_3=D_4$.
Hence, Theorem~\ref{thm:zassenhaus and rank} implies that $\rk(G)$ is finite, and thus by Theorem~\ref{thm:bk} 
$G$ is a $p$-adic analytic Bloch-Kato pro-$p$ group.

 \item[(2)] Assume $p=2$.
From \eqref{eq:Lazard formula} one obtains
\begin{eqnarray*}
&& D_3=\prod_{i2^h\geq3}\gamma_i(G)^{2^h}=\gamma_3(G)\cdot \gamma_2(G)^2\cdot G^4 \\
&\text{and}& D_4=\prod_{i2^h\geq4}\gamma_i(G)^{2^h}=\gamma_4(G)\cdot \gamma_2(G)^2\cdot G^4.
\end{eqnarray*}
Therefore, \eqref{eq:inclusions} implies
\[D_3=\gamma_3(G)\cdot\gamma_2(G)^2\cdot G^4\leq \Phi_2(G)\cdot\gamma_2(G)^2\cdot G^4\leq D_4,\]
as $\gamma_2(G)^2 G^4\leq D_4$.
Thus, one has the equality $D_3=D_4$.
Hence, Theorem~\ref{thm:zassenhaus and rank} implies that $\rk(G)$ is finite, and thus by Theorem~\ref{thm:bk} 
$G$ is a $p$-adic analytic Bloch-Kato pro-$p$ group.

\item[(3)] Assume $p=3$.
By \eqref{eq:Lazard formula}, one has
\begin{eqnarray*}
&& D_4=\prod_{i3^h\geq4}\gamma_i(G)^{3^h}=\gamma_4(G)\cdot\gamma_2(G)^3\cdot G^9 \\
&\text{and}& D_5=\prod_{i3^h\geq5}\gamma_i(G)^{3^h}=\gamma_5(G)\cdot\gamma_2(G)^3\cdot G^9.
\end{eqnarray*}
Therefore, from \eqref{eq:inclusions} one obtains the chain of inclusions
\[ \gamma_4(G) = [G,\gamma_3(G)] \leq [G,D_4] = [D_1,D_4] \leq D_5,\]
which implies
\[ D_4 =\gamma_4(G)\cdot\gamma_2(G)^3\cdot G^9 \leq D_5,\]
as $G^9,\gamma_2(G)^3\leq D_5$.
Thus, one has the equality $D_4=D_5$.
Hence, Theorem~\ref{thm:zassenhaus and rank} implies that $\rk(G)$ is finite, and thus by Theorem~\ref{thm:bk} 
$G$ is a $p$-adic analytic Bloch-Kato pro-$p$ group.
\end{itemize}
This establishes the theorem.
\end{proof}

Note that if $G$ is a finitely generated pro-$p$ group, then $\Phi_2(G)$ is an open subgroup of $G$.
Thus, the quotient $G/\Phi_2(G)$ is finite, and one may reduce the equality $\Phi_2(G)=P_3(G)$
to a condition on finite $p$-groups, as done in \cite[Corollary~4.15]{cmq:fast}.

\begin{coro}
 \label{coro:finite quotients}
A finitely generated Bloch-Kato pro-$p$ group $G$ is $p$-adic analytic if, and only if,
$\Phi(G)/\Phi_2(G)$ is contained in the centre of $G/\Phi_2(G)$.
\end{coro}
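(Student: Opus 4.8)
The plan is to deduce this corollary directly from Theorem~A by translating the equality $\Phi_2(G)=P_3(G)$ into a centrality condition on the finite quotient $G/\Phi_2(G)$. By Theorem~A, a finitely generated Bloch-Kato pro-$p$ group $G$ is $p$-adic analytic if and only if $\Phi_2(G)=P_3(G)$, so it suffices to show that this equality is equivalent to the image of $\Phi(G)$ being central in $G/\Phi_2(G)$.

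First I would record the two relevant descriptions of the subgroups involved, namely $\Phi_2(G)=\Phi(G)^p[\Phi(G),\Phi(G)]$ and $P_3(G)=\Phi(G)^p[G,\Phi(G)]$. Since $[\Phi(G),\Phi(G)]\leq[G,\Phi(G)]$, one always has $\Phi_2(G)\leq P_3(G)$, as already noted in the Introduction. Hence the equality $\Phi_2(G)=P_3(G)$ holds if and only if the reverse inclusion $P_3(G)\leq\Phi_2(G)$ holds. Both $\Phi_2(G)$ and $P_3(G)$ contain the subgroup $\Phi(G)^p$; in particular $\Phi(G)^p\leq\Phi_2(G)$. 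Therefore the inclusion $P_3(G)=\Phi(G)^p[G,\Phi(G)]\leq\Phi_2(G)$ is equivalent to the single condition $[G,\Phi(G)]\leq\Phi_2(G)$.

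It then remains to interpret the condition $[G,\Phi(G)]\leq\Phi_2(G)$ inside the quotient $\bar G=G/\Phi_2(G)$. Passing to $\bar G$, the image of $[G,\Phi(G)]$ is the subgroup $[\bar G,\overline{\Phi(G)}]$, where $\overline{\Phi(G)}=\Phi(G)/\Phi_2(G)$. Thus $[G,\Phi(G)]\leq\Phi_2(G)$ holds precisely when $[\bar G,\overline{\Phi(G)}]=1$, that is, when every element of $\overline{\Phi(G)}$ commutes with every element of $\bar G$. This is exactly the assertion that $\Phi(G)/\Phi_2(G)$ lies in the centre $\Zen(G/\Phi_2(G))$. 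Chaining these equivalences with Theorem~A yields the claim.

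I do not expect a genuine obstacle here: the statement is a purely formal reformulation of Theorem~A, and the only point requiring a little care is to verify that $\Phi(G)^p$ is absorbed into $\Phi_2(G)$, so that the comparison between $P_3(G)$ and $\Phi_2(G)$ collapses to the single commutator condition $[G,\Phi(G)]\leq\Phi_2(G)$, which in turn is the centrality statement in the finite quotient.
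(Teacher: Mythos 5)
Your proposal is correct and follows essentially the same route as the paper: both deduce the corollary from Theorem~A by observing that, since $\Phi(G)^p\leq\Phi_2(G)$ and $P_3(G)=\Phi(G)^p[G,\Phi(G)]$, the equality $\Phi_2(G)=P_3(G)$ reduces to the commutator condition $[G,\Phi(G)]\leq\Phi_2(G)$, which is precisely the centrality of $\Phi(G)/\Phi_2(G)$ in $G/\Phi_2(G)$. The only difference is presentational: you organize the argument as a single chain of equivalences, while the paper treats the two implications separately.
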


\begin{proof}
Assume that $G$ is $p$-adic analytic.
Then Theorem~A yields the equality $\Phi_2(G)=P_3(G)$.
Since $[G,P_2]=[P_1,P_2]\leq P_3$, one has $[G,\Phi(G)]\leq\Phi_2(G)$,
and $\Phi(G)/\Phi_2(G)$ is central in $G/\Phi_2(G)$.

Conversely, assume that $\Phi(G)/\Phi_2(G)$ is central in $G/\Phi_2(G)$.
Hence the commutator subgroup $[G,\Phi(G)]$ is contained in $\Phi_2(G)$.
Since
\[ \Phi(G)^p\leq\Phi_2(G) \quad\text{and}\quad P_3=\Phi(G)^p[G,\Phi(G)], \]
it follows that $\Phi_2(G)$ contains $P_3(G)$, and thus the two subgroups are equal.
Therefore $G$ is $p$-adic analytic by Theorem~A.
\end{proof}


\section{Proof of Corollary B}

Throughout this section, a field $F$ is always assumed to contain a primitive $p$-th root of unity
(and also $\sqrt{-1}$, if $p=2$).
Also, $F^\times$ denotes the multiplicative group of non-zero elements of $F$, and $(F^\times)^p$ is the 
subgroup of $p$-powers of $F^\times$.

\begin{defi}
Let $N$ denote the norm map $N\colon F(\sqrt[p]{a})\to F$ of the $p$-cyclic extension $F(\sqrt[p]{a})/F$.
An $p$-power-free unit $a\in F^\times$ is said to be {\it $p$-rigid} if
\[ b\in N\left(F(\sqrt[p]{a})\right) \quad\text{if, and only if,}\quad b\in \bigcup_{k=0}^{p-1}a^k(F^\times)^p \] 
for every $b\in F^\times\smallsetminus (F^\times)^p$.
The field $F$ is called {\it $p$-rigid} if every element of $F^\times\smallsetminus (F^\times)^p$ is $p$-rigid.
\end{defi}

Recall from the Introduction that $F^{(2)}=F(\sqrt[p]{F})$ is the compositum over $F$
of all extensions $F(\sqrt[p]{a})$ with $a\in F^\times$.
Also,
\begin{itemize}
 \item $F^{\{3\}}=F^{(2)}(\sqrt[p]{F^{(2)}})$ is the compositum over $F^{(2)}$ of all the extensions 
$F^{(2)}(\sqrt[p]{a})$ with $a\in (F^{(2)})^\times$;
 \item $F^{(3)}$ is the compositum over $F^{(2)}$
of all the extensions $F^{(2)}(\sqrt[p]{a})$ such that $F^{(2)}(\sqrt[p]{a})/F$ is Galois.
\end{itemize}
Therefore, both $F^{\{3\}}/F$ and $F^{(3)}/F$ are Galois extensions, and $F^{(3)}\subseteq F^{\{3\}}$
(cf. \cite[\S~2.3]{cmq:fast}).

Let $G$ be the {\it maximal pro-$p$ Galois group} of $F$, i.e., \[G=G_F(p)=\Gal(F(p)/F),\]
where $F(p)$ is the maximal $p$-extension of $F$.
Recall that the maximal pro-$p$ Galois group of a field containing a primitive $p$-th root of unity is a Bloch-Kato
pro-$p$ group (cf. \cite[\S~2]{blochkato}).

By Kummer theory, one has that the Galois group of $F^{(2)}/F$ is the quotient $G/\Phi(G)$.
Note that $G$ is finitely generated if, and only if, the quotient $F^\times/(F^\times)^p$ is finite
(and in this case $\dd(G)=\dim(F^\times/(F^\times)^p)$), as $G/\Phi(G)$ and $F^\times/(F^\times)^p$ are 
isomorphic as discrete groups of exponent $p$.
Moreover,
\begin{equation}
 \label{eq:galois groups}
\Gal(F^{(3)}/F)=G/P_3(G) \quad\text{and}\quad \Gal(F^{\{3\}}/F)=G/\Phi_2(G) 
\end{equation}
(cf. \cite[\S~4.1]{cmq:fast}, see also \cite[\S~2]{agkm}).

\begin{rem}
 In the case $p=2$, the Galois groups $\Gal(F^{(3)}/F)$ and $\Gal(F^{\{3\}}/F)$
are called $W$-group, resp. $V$-group, of the field $F$, for the relations with the Witt ring of $F$
(cf. \cite{minacspira} and \cite{agkm}).
\end{rem}

\begin{proof}[Proof of Corollary~B]
Let $G$ be the maximal pro-$p$ Galois group $G_F(p)$.
By hypothesis, $G$ is finitely generated.
Moreover, $G$ is torsion free, since we are assuming that $\sqrt{-1}\in F$ for $p=2$.

Assume first that the equality $F^{(3)}=F^{\{3\}}$ holds.
Then, by \eqref{eq:galois groups} one has also the equality $\Phi_2(G)=P_3(G)$,
and thus Theorem~A implies that $G$ is a $p$-adic analytic Bloch-Kato pro-$p$ group,
and Theorem~\ref{thm:bk} implies that $G$ is powerful.
Therefore, by \cite[Proposition~3.8]{cmq:fast} the field $F$ is $p$-rigid.

Conversely, assume that $F$ is $p$-rigid.
Then, again by \cite[Proposition~3.8]{cmq:fast} the Galoi group $G$ is powerful, and thus $p$-adic analytic
by Theorem~\ref{thm:bk}.
Therefore, Theorem~A implies the equality $\Phi_2(G)=P_3(G)$, and the equality $F^{(3)}=F^{\{3\}}$ 
follows by \eqref{eq:galois groups}.
\end{proof}

\section*{Acknowledgements}

I want to express my thanks to the referee for the {\it Annales math\'ematiques du Qu\'ebec}
for her/his valuable comments and remarks.
Also, many thanks to A.~Chapman and D.~Neftin for their interest and support,
to D.~Riley for the thoughtful discussions about the Zassenhaus filtration and restricted Lie algebras, 
and to S.K.~Chebolu and J.~Min\'a\v{c} for working with me on $p$-rigid fields.

\begin{center}
 \includegraphics[scale=0.095]{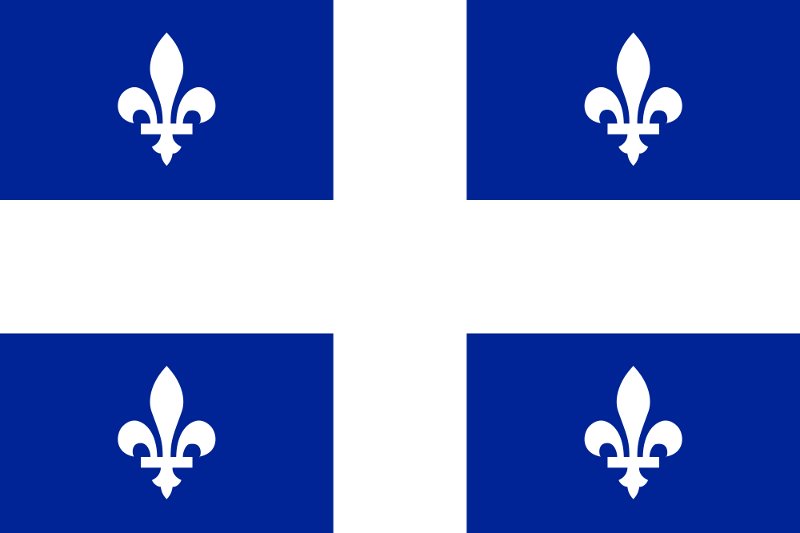}

\end{center}


\end{document}